\DeclareMathOperator{\sing}{Sing} 
\newtheorem{theorem}{Theorem}[section]
\newtheorem{corollary}[theorem]{Corollary}
\theoremstyle{definition}
\theoremstyle{remark}
\newtheorem{remark}[theorem]{Remark}
\newcommand{\dc}{\ensuremath{\mathcal{D}}}
\newcommand{\hc}{\ensuremath{\mathcal{H}}}
\newcommand{\Sc}{\ensuremath{\mathcal{S}}}
\newcommand{\bH}{\mathbb{H}}
\newcommand{\bZ}{\mathbb{Z}}
\newcommand{\bQ}{\mathbb{Q}}
\def\bin #1#2 {\left( \matrix { #1 \cr #2 \cr } \right) }
\newcommand{\tS}{\widetilde{{\ensuremath{\mathcal{S}}}}}
\begin{document}

\title[Polynomial identities related to Special Schubert varieties]
{Polynomial identities related to Special Schubert varieties}

\author{Francesca Cioffi }
\address{Universit\`a di Napoli Federico II, 
Dipartimento di Matematica e
Applicazioni \lq\lq R. Caccioppoli\rq\rq,  Via
Cintia, 80126 Napoli, Italy} \email{francesca.cioffi@unina.it}

\author{Davide Franco }
\address{Universit\`a di Napoli Federico II, 
Dipartimento di Matematica e
Applicazioni \lq\lq R. Caccioppoli\rq\rq,  Via
Cintia, 80126 Napoli, Italy} \email{davide.franco@unina.it}

\author{Carmine Sessa }
\address{Universit\`a di Napoli Federico II, 
Dipartimento di Matematica e
Applicazioni \lq\lq R. Caccioppoli\rq\rq,  Via
Cintia, 80126 Napoli, Italy} \email{carmine.sessa2@unina.it}

\abstract Let $\mathcal S$ be a single condition Schubert variety with an arbitrary number of strata. Recently, an explicit description of the summands involved in the decomposition theorem applied to such a variety has been obtained in a paper of the second author. Starting from this result, we provide an explicit description of the Poincar\'{e} polynomials of the intersection cohomology of $\mathcal S$ by means of the Poincar\'{e} polynomials of its strata, obtaining interesting polynomial identities relating Poincar\'{e} polynomials of several Grassmannians, both by a local and by a global point of view. We also present a symbolic study of a particular case of these identities.

\bigskip\noindent {\it{Keywords}}: Leray-Hirsch theorem,  Derived category, Intersection cohomology,
Decomposition Theorem,  
Schubert varieties, Resolution of
singularities.

\medskip\noindent {\it{MSC2010}}\,:  Primary 14B05; Secondary 14E15, 14F05,
14F43, 14F45, 14M15, 32S20, 32S60, 58K15.

\endabstract
\maketitle

\section{Introduction}

In the paper \cite{DGF2strata}, it was shown how one can obtain suitable polynomial identities from the study of the intersection cohomology of Schubert varieties with two strata (compare with \cite[p.~115]{DGF2strata}). The aim of our work is to extend the same approach to {\it Special Schubert varieties with an arbitrary number of strata}, by showing that the Poincar\'e polynomials of their intersection cohomology naturally leads to a class of tricky polynomial identities. In the final Appendix, we provide some of the numerical tests for the polynomial identities that we obtained in the meantime, and a symbolic study of a particular case.

The starting point of our analysis is the main result of the paper \cite{Forum}, which we now summarize. 
Let $\mathcal S$ be a {\it single condition Schubert variety or special Schubert variety} of dimension $n$
(see \cite[p. 328]{CGM} and \cite[Example 8.4.9]{Kirwan}). As  it is well known, $\Sc$ admits two standard resolutions: a
{\it small resolution} $\xi: \dc \to \Sc$ \cite[Definition 8.4.6]{Kirwan} and a (usually) non-small one
$\pi: \tS \to \Sc$ \cite[\S 3.4 and Exercise 3.4.10]{Manivel}. We will describe both resolutions $\pi $ and $\xi $ in \S 2.4.
By \cite[\S 6.2]{GMP2} and \cite[Theorem 8.4.7]{Kirwan}, we have 
\begin{equation}\label{decsmall}
IC^{\bullet}_{\Sc}\simeq R\xi {_*}\bQ_{\dc}[n] \quad\text{{\rm  in}} \quad D^b_c(\Sc),
\end{equation}
where $IC^{\bullet}_{\Sc}$
denotes the \textit{intersection cohomology complex} of $\Sc$ \cite[p. 156]{Dimca2},  and $D^b_c(\Sc)$ is the constructible derived category  
 of sheaves of $\mathbb Q$-vector spaces on $\Sc$. 

By the celebrated Decomposition theorem \cite{BBD,DeCMHodge,DeCMReview,Saito}, 
 the intersection cohomology complex of $\Sc$ is also a direct summand of 
$R\pi {_*}\bQ_{\tS}[n]$ in $D^b_c(\Sc)$. Specifically, the Decomposition theorem says that there is a decomposition in $D^b_c(\Sc)$ \cite[Theorem 1.6.1] {DeCMReview} 
\begin{equation}\label{decomposition}
R\pi {_*}\bQ_{\tS}[n]\cong \oplus_{i\in \bZ }\sideset{_{}^{p}}{_{}^{i}}{\mathop \hc} (R\pi {_*}\bQ_{\tS}[n])[-i],
\end{equation}
where
$\sideset{_{}^{p}}{_{}^{i}}{\mathop \hc}(R\pi {_*}\bQ_{\tS}[n])$ denote the {\it perverse cohomology sheaves}
\cite[\S 1.5]{DeCMReview}. Furthermore, the perverse cohomology sheaves $\sideset{_{}^{p}}{_{}^{i}}{\mathop \hc}(R\pi {_*}\bQ_{\tS}[n])$ are semisimple, i.e.~direct sum of  intersection cohomology complexes
of semisimple local systems, supported in the smooth strata of $\Sc$.  

In the paper \cite{Forum}, the summands involved in \eqref{decomposition} are explicitly described.
It turns out  that the semisimple local systems involved in the decomposition are constant sheaves supported in the smooth strata of $\pi$. In other words, the decomposition \eqref{decomposition} takes the following form
\begin{equation}\label{decimpr}
R\pi {_*}\bQ_{\tS}\cong \bigoplus_{p, q}IC_{\Delta_p}^{\bullet}[q]^{\oplus m_{pq}}
\end{equation}
for suitable multiplicities $m_{pq}\in \mathbb N_0$ (that are computed in \cite[Theorem 3.5]{Forum}) and where the strata $\Delta_p$ are special Schubert varieties, as well. 

Following the same lines as in \cite[section $4$]{DGF2strata}, our main aim is to deduce some classes of polynomial identities from the isomorphism \eqref{decimpr}. Specifically, we are going to prove a class of \textit{local identities} as well as a class of \textit{global identities}.

Our first task is accomplished in Theorem \ref{local}. In a nutshell, the argument behind our local polynomial identity rests on the remark that {\it each summand of \eqref{decimpr} is a direct sum of shifted trivial local systems in}   $D_c^b(\Delta_p^0)$, when restricted to the smooth part  $\Delta_p^0$ of each stratum $\Delta_p$. This fact follows by applying the Leray-Hirsch theorem (see \cite[Theorem 7.33]{Voisin}, \cite[Lemma 2.5]{DGF2}) to the summands, that are described  on $\Delta_p^0$ by means of  suitable Grassmann fibrations.  This implies that we are allowed to associate a Poincar\'e polynomial to each summand of \eqref{decimpr}, thus providing our local identity in the stratum $\Delta_p^0$ (for more details compare with \S 3).

As for the global polynomial identities, the idea is very similar to that of \cite[section 4]{DGF2strata} (compare with Theorem \ref{global}). From \eqref{decimpr} we deduce an isomorphism among the i-th hypercohomology spaces
\begin{equation}\label{decimprhyper1}
\bH^i(R\pi {_*}\bQ_{\tS})\cong \bigoplus_{p, q}\bH^i(IC_{\Delta_p}^{\bullet}[q]^{\oplus m_{pq}}),
\end{equation}
that leads to an equality of the corresponding Poincar\'e polynomials
\begin{equation}\label{decimprhyper2}
\sum_i t^i\dim\bH^i(R\pi {_*}\bQ_{\tS})= \sum_{i,\, p,\, q} t^i\dim\bH^i(IC_{\Delta_p}^{\bullet}[q]^{\oplus m_{pq}}).
\end{equation}
Again, all summands of \eqref{decimprhyper2} are determined by means of Leray-Hirsch theorem as Poincar\'e polynomials of suitable Grassmann fibrations.

We also observe that an \textit{explicit inductive algorithm for the computation of the Poincar\'e polynomials of the intersection cohomology of Special Schubert varieties} straightforwardly follows from our results (see Corollary \ref{Corollary} and Remark \ref{ExampleIC}). Although these Poincar\'e polynomials are already known, the availability of an algorithm for their computation could be the starting point for obtaining an analogous algorithm for all Schubert varieties in a future paper, being an explicit formula in this general case not known yet.

In the Appendix, we give an example of proof of the global polynomial identity of Theorem~\ref{global} in a particular case, by algebraic manipulation only, with a divide and conquer strategy.

\medskip
\section{Basic facts and notations}

\subsection{Preliminaries}

Throughout the paper, we shall work with $\mathbb{Q}$-coefficients \textit{cohomology groups}; that is, for any complex variety $V$ and any integer $k$, $H^{k}(V) = H^{k}(V, \mathbb{Q})$.
Let $D_{c}^{b}(V)$ denote the \textit{derived category of bounded constructible complexes of sheaves} $\mathcal{F^{\bullet}}$ on $V$ (\cite[\textsection 1.3 and \textsection 4.1]{Dimca2}, \cite[\textsection 1.5]{DeCMReview}). The symbol $\mathbb{H}^{k}(\mathcal{F}^{\bullet})$ stands for the $k$-th \textit{hypercohomology group} of $\mathcal{F}^{\bullet}$ (\cite[Defintion 2.1.4]{Dimca2}), while $IC_{V}^{\bullet}$ represents the \textit{intersection cohomology complex} of $V$ (\cite[\textsection 5.4]{Dimca2} and \cite[\textsection 1.5, \textsection 2.1]{DeCMReview}). Lastly, the intersection cohomology groups of a pure $n$-dimensional complex algebraic variety $V$ are given by (\cite[Definition 5.4.3]{Dimca2})
\begin{equation*}
IH^{k}(V) = IH^{k}(V, \mathbb{Q}) = \mathbb{H}^{k}(V, IC_{V}^{\bullet} \left[ - n \right]).
\end{equation*}

\subsection{Decomposition theorem}

The \textit{Decomposition theorem}, which was proved by A. Beilinson, J. Bernstein and P. Deligne in \cite{BBD}, is a tool of paramount importance: most of our results descend from it directly.

\begin{theorem}(Decomposition theorem, \cite[\textsection (1.6.1)]{DeCMReview})
Let $f: X \rightarrow Y$ be a proper map of complex algebraic varieties. There is an isomorphism in the constructible bounded derived category $D_{c}^{b}(Y)$
	\begin{equation*}
	Rf_{*} IC_{X} \cong \bigoplus_{i \in \mathbb{Z}} \prescript{\mathfrak{p}}{}{\mathcal{H}}^{i}(Rf_{*} IC_{X}) \left[ - i \right].
	\end{equation*}

Furthermore, the perverse sheaves $\prescript{\mathfrak{p}}{}{\mathcal{H}}^{i}(Rf_{*} IC_{X})$ are semisimple; i.e., there is a decomposition into finitely many disjoint locally closed and nonsingular subvarieties $Y = \coprod S_{\beta}$ and a canonical decomposition into a direct sum of intersection complexes of semisimple local systems
	\begin{equation*}
	\prescript{\mathfrak{p}}{}{\mathcal{H}}^{i}(Rf_{*} IC_{X}) \cong \bigoplus_{\beta} IC_{\overline{S_{\beta}}}(L_{\beta}).
	\end{equation*}
\end{theorem}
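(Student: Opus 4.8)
The displayed statement is the Beilinson--Bernstein--Deligne--Gabber Decomposition theorem, one of the deepest results in the field; in a paper of this kind it is quoted rather than reproved, and everything below rests on it. Nonetheless, let me sketch the strategy of the Hodge-theoretic proof of de Cataldo and Migliorini, the one most in keeping with the complex-analytic setting used here. The plan is to reduce first to the model case of a \emph{projective} morphism $f\colon X\to Y$ with $X$ \emph{smooth}, where $IC_X\simeq\bQ_X[\dim X]$ (the convention of \S 2.1, compatible with \eqref{decsmall}); the general case is then recovered by Chow's lemma, resolution of singularities, and a dévissage on the dimension of the support, using that for a resolution $\rho\colon\tilde X\to X$ the complex $IC_X$ is a direct summand of $R\rho_*\bQ_{\tilde X}[\dim X]$ --- precisely the kind of reduction exploited in the present paper through the small resolution $\xi$. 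Fix a relatively ample class $\eta\in H^2(X)$, regarded as a morphism $\eta\colon Rf_*\bQ_X[\dim X]\to(Rf_*\bQ_X[\dim X])[2]$ in $D^b_c(Y)$.

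The core input is the \emph{Relative Hard Lefschetz theorem}: for each $c\geq 0$ the iterated cup product
\[
\eta^{c}\colon\ \prescript{\mathfrak p}{}{\mathcal H}^{-c}\bigl(Rf_*\bQ_X[\dim X]\bigr)\ \xrightarrow{\ \sim\ }\ \prescript{\mathfrak p}{}{\mathcal H}^{c}\bigl(Rf_*\bQ_X[\dim X]\bigr)
\]
is an isomorphism of perverse sheaves on $Y$. Granting this, a formal degeneration lemma of Deligne (an object of $D^b_c(Y)$ equipped with such an $\mathfrak{sl}_2$-type action on the collection of its perverse cohomology sheaves is isomorphic to the direct sum of their shifts) yields the first assertion, $Rf_*\bQ_X[\dim X]\cong\bigoplus_i\prescript{\mathfrak p}{}{\mathcal H}^i(Rf_*\bQ_X[\dim X])[-i]$. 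For the semisimplicity of each $\prescript{\mathfrak p}{}{\mathcal H}^i$, one stratifies $Y$ so that these perverse sheaves are smooth along strata and then, working from the open stratum inward, splits off the intermediate extension $IC_{\overline{S_\beta}}(L_\beta)$ of its restriction to the deepest stratum it meets; the obstruction to such a splitting is measured by an intersection pairing on the relevant local (co)stalk cohomology, which one shows to be nondegenerate --- indeed a polarization --- by Hodge theory.

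The main obstacle, and the technical heart of the whole proof, is establishing Relative Hard Lefschetz together with the \emph{Hodge--Riemann bilinear relations} for those intersection pairings. De Cataldo and Migliorini prove an entire package at once --- the relative Hard Lefschetz isomorphism, the primitive Lefschetz decomposition of the perverse cohomology sheaves, the purity of the natural Hodge structures on their stalks, and the Hodge--Riemann relations on the primitive summands --- by an intricate double induction on $\dim Y$ and on the defect of semismallness of $f$, the inductive step cutting down via generic hyperplane sections of $X$ and of $Y$ so as to feed the classical Hard Lefschetz and Hodge--Riemann theorems into the relative situation, the crucial positivity coming from the signature of the cup-product form on a smooth hyperplane section. (Two alternative routes --- the original $\ell$-adic argument of \cite{BBD}, which reduces to varieties over $\overline{\mathbb F}_q$ and replaces Relative Hard Lefschetz by Deligne's theorem from Weil~II that $Rf_*$ of a pure complex is pure, semisimplicity then following from purity of perverse sheaves in characteristic $p$; and Saito's theory of mixed Hodge modules, in which the decomposition is essentially built into the formalism --- each import a still heavier apparatus.) For the purposes of the present work only the output matters: the later sections use the \emph{explicit shape} \eqref{decimpr} of the decomposition for special Schubert varieties, as determined in \cite{Forum}, and never the proof of the theorem itself.
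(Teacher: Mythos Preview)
Your assessment is correct: the paper does not prove this theorem at all --- it is stated as a quotation from \cite[\S 1.6.1]{DeCMReview}, followed only by an informal paraphrase and a list of references (\cite{BBD}, \cite{DeCMHodge}, \cite{Saito}, \cite{Williamson}) for the various known proofs. Your sketch of the de Cataldo--Migliorini Hodge-theoretic approach is accurate and appropriate, but since the paper contains no proof of its own there is nothing to compare against; your opening sentence already captures the situation precisely.
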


Roughly speaking, Decomposition theorem states that, under mild hypotheses, the direct image of the intersection cohomology complex of a complex algebraic variety can be thought of as the direct sum of \textit{intermediate extensions}  (\cite[\S 2.7]{DeCMReview}) of semisimple local systems (\cite[\S 2.5]{Dimca2}).

In the literature
one can find different approaches to the Decomposition Theorem
\cite{BBD}, \cite{DeCMHodge}, \cite{DeCMReview}, \cite{Saito},
{\cite{Williamson}, which is a very general result but also rather implicit.
On the other hand,  there are many special cases for which the Decomposition
Theorem admits a simplified and explicit approach. One of these is  the case of
varieties with isolated singularities \cite{N,DGF3,DGF4}. For instance, in the work  \cite{DGF3}, a simplified approach to the Decomposition Theorem for varieties with isolated singularities is developed, in connection with the existence of a \textit{natural Gysin morphism}, as defined in \cite[Definition 2.3]{DGF1} (see also \cite{RCMP} for other applications of the Decomposition Theorem to the Noether-Lefschetz Theory).

\subsection{Grassmannians and Poincaré polynomials}\label{Sec01}

We shall denote by $\mathbb{G}_{k}(\mathbb{C}^{n})$ the Grassmannian of $k$-vector subspaces of $\mathbb{C}^{n}$; that is, the set of all $k$-dimensional subspaces of $\mathbb{C}^{n}$. More in general, we can extend this definition by replacing $\mathbb{C}^{n}$ with any complex vector space $V$ (see \cite[\textsection 6]{Har1992} \cite[\textsection 1.5]{GrHa1978} \cite[\textsection 3.1]{Manivel}).

Let $X$ be a topological space. The \textit{Poincaré polynomial} $H_{X}$ of its \textit{cohomology} and the \textit{Poincaré polynomial} $IH_{X}$ of its \textit{intersection cohomology} (later on, they will be simply called Poincaré polynomials) are given by
	\begin{equation*}
	H_{X} = \sum_{\alpha \in \mathbb{Z}} \dim_{\mathbb{Q}} H^{\alpha}(X) \cdot t^{\alpha} \qquad \mbox{and} \qquad IH_{X} = \sum_{\alpha \in \mathbb{Z}} \dim_{\mathbb{Q}} IH^{\alpha}(X) \cdot t^{\alpha},
	\end{equation*}
respectively. When $X = \mathbb{G}_{k}(\mathbb{C}^{l})$, we have the following explicit formula of the Poincaré polynomial (see \cite[p. 328]{CGM}, \cite[\textsection 2 (vi), (vii), (viii)]{DGF2strata})
	\begin{equation*}
	H_{\mathbb{G}_{k}(\mathbb{C}^{l})} = \frac{P_{l}}{P_{k} P_{l - k}},
	\end{equation*}
where we assume $P_{0} = 1$ and take
	\begin{equation*}
	P_{\alpha} = h_{0} \cdot \ldots \cdot h_{\alpha - 1}, \> \forall \alpha > 0 \quad \mbox{and} \quad h_{\alpha} = \sum_{i = 0}^{\alpha} t^{2 \alpha}, \> \forall \alpha \geq 0.
	\end{equation*}

\subsection{Special Schubert Varieties}

In this subsection we collect some facts concerning special Schubert varieties and their resolutions. For more details and explanations we refer the reader to \cite[\S 2.2 - 2.6]{Forum}.

Let $i, j, k, l$ be integers such that
	\begin{equation*}
	0 < i < k \leq j < l \mbox{ and } r = k - i < l - j = c
	\end{equation*}
and fix a $j$-dimensional subspace $F \subseteq \mathbb{C}^{l}$. We are working with \textit{single condition (or special) Schubert varieties}
	\begin{equation*}
	\mathcal{S} = \{ V \in \mathbb{G}_{k}(\mathbb{C}^{l}): \dim(V \cap F) \geq i \}
	\end{equation*}
and we are considering the Whitney stratification
	\begin{equation*}
	\Delta_{1} \subset \ldots \subset \Delta_{r} \subset \Delta_{r + 1} = \mathcal{S}
	\end{equation*}
where, for any $p$,
	\begin{equation*}
	\Delta_{p} = \{ V \in \mathbb{G}_{k}(\mathbb{C}^{l}): \dim(V \cap F) \geq i_{p} = k - p + 1 \}
	\end{equation*}
is a special Schubert variety, as well, and $\Delta_{p} = \sing \Delta_{p + 1}$.
\\
For any $0 < q < p \leq r + 1$ there is a commutative diagram
	\begin{center}
		\begin{tikzcd}
		\Delta^{0}_{pq} \arrow[r, hook] \arrow[d, "\rho_{pq}", swap] & \tilde{\Delta}_{p} \arrow[d, "\pi_{p}"]\\
		\Delta^{0}_{q} \arrow[r, hook, swap, "i"] & \Delta_{p}
		\end{tikzcd}
	\end{center}
where
	\begin{align*}
	\Delta^{0}_{q} &= \Delta_{q} \backslash \sing \Delta_{q} = \{ V \in \mathbb{G}_{k}(\mathbb{C}^{l}): \dim(V \cap F) = i_{q} \},\\
	\tilde{\Delta}_{p} &= \{ (Z, V) \in \mathbb{G}_{i_{p}}(F) \times \mathbb{G}_{k}(\mathbb{C}^{l}): Z \subseteq V \},\\
	\Delta^{0}_{pq} &= \pi^{- 1}_{p}(\Delta^{0}_{q}) =\\
	&= \{ (Z, V) \in \mathbb{G}_{i_{p}}(F) \times \mathbb{G}_{k}(\mathbb{C}^{l}): Z \subseteq V \mbox{ and dim}(V \cap F) = i_{q} \},
	\end{align*}
the map
	\begin{equation*}
	\pi_{p}: (Z, V) \in \tilde{\Delta}_{p} \mapsto V \in \Delta_{p}
	\end{equation*}
is a resolution of singularities, and the function
	\begin{equation*}
	\rho_{pq}: (Z, V) \in \Delta^{0}_{pq} \mapsto V \in \Delta^{0}_{q}
	\end{equation*}
is a fibration with fibres
	\begin{equation*}
	F_{pq} = \mathbb{G}_{i_{p}}(\mathbb{C}^{i_{q}}).
	\end{equation*}

The resolutions $\pi_{p}$ are small when $k \leq c$ (see \cite[Remark 2.3]{Forum}), whereas there are other small resolutions when $k > c$ (see \cite[Proof of Lemma 3.2]{Forum}); namely
	\begin{equation*}
	\xi_{p}: (V, U) \in \mathcal{D}_{p} = \{ (V, U) \in \mathbb{G}_{k}(\mathbb{C}^{l}) \times \mathbb{G}_{k + j - i_{p}}(\mathbb{C}^{l}) : V + F \subseteq U \} \mapsto V \in \Delta_{p}.
	\end{equation*}

\medskip
\section{Local polynomial identities}

\medskip
Before we give the proof of the first theorem, we shall fix some notations in order to make it more readable. For any pair of integers $(p, q)$ with $0 < q < p$, we set
	\begin{align*}
	m_{p} &= \dim \Delta_{p} = (k + 1 - p)(j + p - k - 1) + (p - 1)(l - k),\\
	\delta_{pq} &= \dim \mathbb{G}_{p - q}(\mathbb{C}^{k - c}) = (p - q)(k - c + q - p)
	\end{align*}

and
	\begin{alignat*}{2}
	A^{\alpha}_{pq} &= H^{i}(F_{pq}), \qquad F_{pq} &&= \mathbb{G}_{i_{p}}(\mathbb{C}^{i_{q}}),\\
	D^{\alpha}_{pq} &= H^{\alpha}(T_{pq}), \qquad T_{pq} &&= \mathbb{G}_{p - q}(\mathbb{C}^{k - c}),\\
	B^{i}_{pq} &= H^{i}(G_{pq}), \qquad G_{pq} &&= \mathbb{G}_{p - q}(\mathbb{C}^{c - q + 1}).
	\end{alignat*}

\begin{theorem}\label{local}
For any pair of integers $(p, q)$ with $0 < q < p$ there is a local polynomial identity
	\begin{align*}
	\frac{P_{k - q + 1}}{P_{k - p + 1} P_{p - q}} &= \sum_{r = q + 1}^{p - 1} \left( \frac{P_{k - c}}{P_{p - r} P_{k - c - p + r}} \cdot \frac{P_{c - q + 1}}{P_{r - q} P_{c - r + 1}} \cdot t^{2d_{pr}} \right) +\\
	&+ \frac{P_{k - c}}{P_{p - q} P_{k - c - p + q}} \cdot t^{2d_{pq}} + \frac{P_{c - q + 1}}{P_{p - q} P_{c - p + 1}}.
	\end{align*}
\end{theorem}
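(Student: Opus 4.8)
The plan is to derive the identity from the isomorphism \eqref{decimpr} by restricting to the open stratum $\Delta_p^0$ and comparing Poincaré polynomials summand by summand. First I would recall from \cite[Theorem 3.5]{Forum} the explicit shape of \eqref{decimpr}: the multiplicities $m_{pq}$ are dimensions of the cohomology of the Grassmannians $G_{pq}=\mathbb{G}_{p-q}(\mathbb{C}^{c-q+1})$ appearing in the statement, and the shifts $q$ range so that the intersection cohomology complex $IC_{\Delta_r}^\bullet$ enters with multiplicities governed by $B_{pr}^i$. Restricting both sides of \eqref{decimpr} to $\Delta_p^0$, the left side $R\pi_{p*}\bQ_{\tilde\Delta_p}|_{\Delta_p^0}$ is, by the fibration $\rho_{pp}$ with fiber $F_{pp}=\mathbb{G}_{i_p}(\mathbb{C}^{i_p})$ — wait, more precisely one uses $\rho_{p q}$ for the relevant $q=p$ case, i.e.\ the fiber over a point of the \emph{top} stratum — a local system which by the Leray--Hirsch theorem (\cite[Theorem 7.33]{Voisin}, \cite[Lemma 2.5]{DGF2}) splits as a direct sum of shifted trivial sheaves, with Poincaré polynomial $H_{\mathbb{G}_{k-p+1}(\mathbb{C}^{k-p+1})}\cdot(\ldots)$; here the key input is that each $\rho_{pq}$ is a Grassmann fibration, so Leray--Hirsch applies and the restriction of every summand trivializes.

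Next I would compute the left-hand side explicitly. On $\Delta_p^0$ the map $\pi_p$ restricts to the fibration with fiber $\mathbb{G}_{i_p}(\mathbb{C}^{i_p})$ — which is a point — no: over $\Delta_p^0=\{\dim(V\cap F)=i_p\}$ the fiber of $\pi_p$ is $\mathbb{G}_{i_p}(V\cap F)=\mathbb{G}_{i_p}(\mathbb{C}^{i_p})$, a point, so this is not the right normalization; the correct statement (cf.\ \S2.4) is that one compares the stalks at a point of $\Delta_q^0\subseteq\Delta_p$, and the fiber $F_{pq}=\mathbb{G}_{i_p}(\mathbb{C}^{i_q})=\mathbb{G}_{k-p+1}(\mathbb{C}^{k-q+1})$ has Poincaré polynomial $P_{k-q+1}/(P_{k-p+1}P_{p-q})$, which is precisely the left-hand side of the asserted identity. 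So the left-hand side is $H_{F_{pq}}$, the Poincaré polynomial of the fiber of the resolution $\pi_p$ over a point of the stratum $\Delta_q^0$. On the right-hand side, the summand $IC^\bullet_{\Delta_r}[q']^{\oplus m_{pq'}}$ restricted to $\Delta_q^0\subseteq\Delta_r$ (when $q<r$) contributes, via the fibration $\rho_{rq}$ with fiber $F_{rq}=\mathbb{G}_{r-q}$-type Grassmannian wait — one must use that $IC^\bullet_{\Delta_r}|_{\Delta_q^0}$ is the constant sheaf shifted (since $\Delta_q^0$ is in the smooth locus relative to the relevant fibration), so its contribution is $H_{G_{rq}}$-weighted; assembling, the total right-hand side becomes $\sum_{r=q+1}^{p-1} H_{T_{pr}}\,H_{G_{rq}}\,t^{2d_{pr}}+H_{T_{pq}}\,t^{2d_{pq}}+H_{G_{pq}}$, where $d_{pr}=\delta_{pr}$ are the dimension shifts $\dim\mathbb{G}_{p-r}(\mathbb{C}^{k-c})$ recorded before the theorem. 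Substituting the explicit Grassmannian Poincaré polynomials $H_{\mathbb{G}_{a}(\mathbb{C}^{b})}=P_b/(P_aP_{b-a})$ then yields the three terms on the right of the statement exactly.

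Concretely, the steps in order are: (i) write down \eqref{decimpr} with the precise multiplicities and shifts from \cite[Theorem 3.5]{Forum}; (ii) restrict both sides to a point $x\in\Delta_q^0$, i.e.\ take stalk cohomology $\mathcal{H}^\bullet(-)_x$; (iii) identify the left stalk with $H^\bullet(F_{pq})$ using properness of $\pi_p$ and the fiber description $\pi_p^{-1}(x)=\mathbb{G}_{i_p}(\mathbb{C}^{i_q})$; (iv) for each summand on the right, use that $IC^\bullet_{\Delta_r}$ restricted near $\Delta_q^0$ is computed by the Grassmann fibration $\rho_{rq}$ and invoke Leray--Hirsch to get its stalk Poincaré polynomial as $H_{T_{rq}}$ or, combined with the multiplicity $m_{pr}=\dim H^\bullet(G_{pr})$ summed against the shift, the product form above; (v) collect all contributions, separating the generic range $q<r<p$, the boundary term $r=q$ (the $T_{pq}$ term with its shift $t^{2d_{pq}}$), and the top term $r=p$ (the $G_{pq}$ term, unshifted because $IC^\bullet_{\Delta_p}|_{\Delta_p^0}=\bQ[m_p]$); (vi) substitute the closed formulas for all Poincaré polynomials of Grassmannians and read off the identity.

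The main obstacle I anticipate is bookkeeping the perverse shifts correctly: one must track how the degree shifts $[n]$, $[-i]$ in \eqref{decomposition}–\eqref{decimpr} interact with the normalization $IC^\bullet_{\Delta_r}=IC_{\Delta_r}[m_r]$ and with the dimension count $d_{pr}=(p-r)(k-c+r-p)=\delta_{pr}$, so that the powers $t^{2d_{pr}}$ come out with exactly the right exponent and no spurious shift survives on the $G_{pq}$ term. The geometric content — that every relevant local system is trivial and Leray--Hirsch applies — is already granted by the discussion preceding the theorem and by \cite{Forum}; the delicate part is purely the combinatorial matching of multiplicities $m_{pq}$ with the cohomology of $G_{pq}$ and the verification that the ranges of summation in \eqref{decimpr} restrict, after passing to $\Delta_q^0$, exactly to $q+1\le r\le p-1$ plus the two extreme terms. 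Once the shifts are pinned down, the substitution of the explicit $P_\alpha$-formulas is routine.
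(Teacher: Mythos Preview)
Your overall approach is the paper's: restrict the explicit decomposition of \cite[Theorem~3.5]{Forum} to $\Delta_q^0$, use Leray--Hirsch to trivialize all summands, and read off the identity $H_{F_{pq}}=\sum_{r=q+1}^{p-1}H_{T_{pr}}H_{G_{rq}}t^{2d_{pr}}+H_{T_{pq}}t^{2d_{pq}}+H_{G_{pq}}$, which you write down correctly. But your exposition swaps the roles of the two families of Grassmannians in two places, and one numerical claim is wrong.

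First, the multiplicities $m_{pr}$ in \eqref{decimpr} (equivalently, the coefficients $D_{pr}^\alpha$ in \cite[Theorem~3.5]{Forum}) are the Betti numbers of $T_{pr}=\mathbb{G}_{p-r}(\mathbb{C}^{k-c})$, \emph{not} of $G_{pr}=\mathbb{G}_{p-r}(\mathbb{C}^{c-r+1})$; conversely, the stalk of $IC^\bullet_{\Delta_r}$ at a point of $\Delta_q^0$ is governed (via \cite[Remark~3.3]{Forum}) by $G_{rq}$, \emph{not} by $T_{rq}$. Your sentences ``the multiplicities $m_{pq}$ are dimensions of the cohomology of the Grassmannians $G_{pq}$'' and ``get its stalk Poincar\'e polynomial as $H_{T_{rq}}$ \ldots multiplicity $m_{pr}=\dim H^\bullet(G_{pr})$'' have these reversed. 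Since your displayed formula has them the right way round, this is expository confusion rather than a fatal gap, but it needs fixing.

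Second, and more substantively, you assert $d_{pr}=\delta_{pr}=\dim\mathbb{G}_{p-r}(\mathbb{C}^{k-c})$. This is false: $\delta_{pr}$ is that dimension, but $d_{pr}$ is a different quantity satisfying $m_p-m_r-\delta_{pr}=2d_{pr}$ (see \cite[\S2.6]{Forum}). This relation is precisely what the paper uses to convert the shifts $[\delta_{pr}+\alpha]$ supplied by \cite[Theorem~3.5]{Forum} into the exponents $t^{2d_{pr}}$ in the final identity. If you set $d_{pr}=\delta_{pr}$ the exponents will come out wrong. This is exactly the ``bookkeeping the perverse shifts correctly'' obstacle you flagged; you need the relation $m_p-m_r-\delta_{pr}=2d_{pr}$ to resolve it.
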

\begin{proof}
By the Decomposition theorem \cite[Theorem 1.6.1]{DeCMReview}, we know that
	\begin{equation*}
	R \pi_{p*} \mathbb{Q}_{\tilde{\Delta}_{p}} \left[ m_{p} \right] \cong \bigoplus_{\alpha \in \mathbb{Z}} \prescript{p}{}{\mathcal{H}}^{\alpha} (R \pi_{p*} \mathbb{Q}_{\tilde{\Delta}_{p}} \left[ m_{p} \right] ) \left[ - \alpha \right].
	\end{equation*}
In \cite[Remark 3.1]{Forum} it is shown how the Leray-Hirsch theorem implies that
	\begin{equation*}
	\prescript{p}{}{\mathcal{H}}^{\alpha} (i^{*} R \pi_{p*} \mathbb{Q}_{\tilde{\Delta}_{p}})\mid _{\Delta_{q}^{0}} \cong A_{pq}^{\alpha - m_{q}} \otimes \mathbb{Q}_{\Delta_{q}^{0}} \left[ m_{q} \right],
	\end{equation*}
that is,
	\begin{equation*}
	\prescript{p}{}{\mathcal{H}}^{\alpha} (i^{*} R \pi_{p*} \mathbb{Q}_{\tilde{\Delta}_{p}} \left[ m_{p} \right] )\mid _{\Delta_{q}^{0}}
 \cong A_{pq}^{\alpha + m_{p} - m_{q}} \otimes \mathbb{Q}_{\Delta_{q}^{0}} \left[ m_{q} \right]
	\end{equation*}
(for a generalization of the Leray-Hirsch   theorem in a categorical framework we refer to \cite[Theorem 7.33]{Voisin} and \cite[Lemma 2.5]{DGF2}).
In addition, in \cite[Theorem 3.5]{Forum} it is proved that
	\begin{equation*}
	\prescript{p}{}{\mathcal{H}}^{\alpha} (R \pi_{p*} \mathbb{Q}_{\tilde{\Delta}_{p}} \left[ m_{p} \right] )\mid _{\Delta_{q}^{0}} \cong \bigoplus_{r = 0}^{p} D^{\delta_{pr} + \alpha}_{pr} \otimes R i_{pr*} IC_{\Delta_{r}}^{\bullet}\mid _{\Delta_{q}^{0}},
	\end{equation*}
where $i_{pr}: \Delta_{r} \hookrightarrow \Delta_{p}$ is an  inclusion. By \cite[Remark 3.3]{Forum}, we also have
	\begin{equation*}
	IC^{\bullet}_{\Delta_{r}}|_{\Delta_{q}^{0}} \cong \bigoplus_{\beta \in \mathbb{Z}} B_{rq}^{\beta} \otimes \mathbb{Q}_{\Delta_{q}^{0}} \left[ m_{r} - \beta \right] \cong \bigoplus_{\beta \in \mathbb{Z}} B_{rq}^{\beta + m_{r}} \otimes \mathbb{Q}_{\Delta_{q}^{0}} \left[ - \beta \right].
	\end{equation*}
Combining these results, we obtain
	\begin{align*}
	&\bigoplus_{\alpha \in \mathbb{Z}} A_{pq}^{\alpha + m_{p} - m_{q}} \otimes \mathbb{Q}_{\Delta_{q}^{0}} \left[ m_{q} - \alpha \right] \cong\\
	\cong &\bigoplus_{\alpha \in \mathbb{Z}} \left( \bigoplus_{r = q}^{p} D^{\delta_{pr} + \alpha}_{pr} \otimes \bigoplus_{\beta \in \mathbb{Z}} B_{rq}^{\beta + m_{r}} \otimes \mathbb{Q}_{\Delta_{q}^{0}} \left[ - \beta \right] \right) \left[ - \alpha \right],
	\end{align*}
where $r \in \{ q, \ldots, p \}$ because $\Delta_{r} \backslash \Delta_{q}^{0} = \emptyset$ whenever $r < q$. Since the $\gamma$-th cohomology group of a topological space is trivial when $\gamma < 0$,  we obtain
	\begin{align}\label{For01}
		\begin{split}
		&\bigoplus_{\alpha \geq - m_{p}} A_{pq}^{\alpha + m_{p}} \otimes \mathbb{Q}_{\Delta_{q}^{0}} \left[ - \alpha \right] \cong\\
		\cong &\bigoplus_{\alpha \geq - m_{p}} \left( \bigoplus_{r = q}^{p} D^{\delta_{pr} + \alpha}_{pr} \otimes \bigoplus_{\beta \geq - m_{p}} B_{rq}^{\beta + m_{r}} \otimes \mathbb{Q}_{\Delta_{q}^{0}} \left[ - \beta \right] \right) \left[ - \alpha \right].
		\end{split}
	\end{align}
The right-hand complex can be rewritten as follows
	\begin{align*}
	&\bigoplus_{\alpha \geq - m_{p}} \left( \bigoplus_{r = q}^{p} D^{\delta_{pr} + \alpha}_{pr} \otimes \bigoplus_{\beta \geq - m_{p}} B_{rq}^{\beta + m_{r}} \otimes \mathbb{Q}_{\Delta_{q}^{0}} \left[ - \beta \right] \right) \left[ - \alpha \right] \cong\\
	\cong &\bigoplus_{\alpha, \beta \geq - m_{p}} \left( \bigoplus_{r = q}^{p} D^{\delta_{pr} + \alpha}_{pr} \otimes B_{rq}^{\beta + m_{r}} \otimes \mathbb{Q}_{\Delta_{q}^{0}} \left[ - \alpha - \beta \right] \right)
	\end{align*}
and, for any $\gamma $, its $\gamma$-th term is
	\begin{equation*}
	\bigoplus_{\alpha + \beta = \gamma} \left( \bigoplus_{r = q}^{p} D^{\delta_{pr} + \alpha}_{pr} \otimes B_{rq}^{\beta + m_{r}} \otimes \mathbb{Q}_{\Delta_{q}^{0}} \right).
	\end{equation*}  

The isomorphism \eqref{For01} implies that the $\gamma$-th terms of those complexes are isomorphic for any $\gamma \geq - m_{p}$; i.e.
	\begin{equation*}
	A_{pq}^{\gamma + m_{p}} \otimes \mathbb{Q}_{\Delta_{q}^{0}} \cong \bigoplus_{\alpha + \beta = \gamma} \left( \bigoplus_{r = q}^{p} D^{\delta_{pr} + \alpha}_{pr} \otimes B_{rq}^{\beta + m_{r}} \otimes \mathbb{Q}_{\Delta_{q}^{0}} \right).
	\end{equation*}
We shall observe one last thing before we compute Poincaré polynomials. For any $n \in \mathbb{N}$, $\mathbb{G}_{0}(\mathbb{C}^{n}) = \{ 0 \}$, as it is the space of $0$-dimensional subspaces through the origin. As a consequence,
	\begin{equation*}
	H^{k}(\mathbb{G}_{0}(\mathbb{C}^{n})) \cong \begin{cases*} \mathbb{Q} &\mbox{if $k = 0$}\\ 0 &\mbox{otherwhise.} \end{cases*}
	\end{equation*}
Therefore, when $r = p$,
	\begin{equation*}
	\delta_{pp} = 0 \quad \mbox{ and } \quad D^{\alpha}_{pp} \cong \begin{cases*} \mathbb{Q} &\mbox{if $\alpha = 0$}\\ 0 &\mbox{otherwhise} \end{cases*}
	\end{equation*}
and, consequently,
	\begin{equation*}
	\bigoplus_{\alpha + \beta = \gamma} D^{\alpha}_{pp} \otimes B_{pq}^{\beta + m_{p}} \otimes \mathbb{Q}_{\Delta_{q}^{0}} \cong B_{pq}^{\gamma + m_{p}} \otimes \mathbb{Q}_{\Delta_{q}^{0}}.
	\end{equation*}
Similarly, when $r = q$, we have
	\begin{equation*}
	B^{\beta + m_{q}}_{pq} \cong \begin{cases*} \mathbb{Q} &\mbox{if $\beta = - m_{q}$}\\ 0 &\mbox{otherwhise} \end{cases*}
	\end{equation*}
and
	\begin{equation*}
	\bigoplus_{\alpha + \beta = \gamma} D^{\delta_{pq}+\alpha}_{pq} \otimes B_{qq}^{\beta + m_{q}} \otimes \mathbb{Q}_{\Delta_{q}^{0}} \cong D_{pq}^{\delta_{pq} + m_{q} + \gamma} \otimes \mathbb{Q}_{\Delta_{q}^{0}}.
	\end{equation*}
In conclusion, for any $\gamma \geq - m_{p}$, we have
	\begin{align*}
	A_{pq}^{\gamma + m_{p}} \otimes \mathbb{Q}_{\Delta_{q}^{0}} \cong &\bigoplus_{\alpha + \beta = \gamma} \left( \bigoplus_{r = q - 1}^{p - 1} D^{\delta_{pr} + \alpha}_{pr} \otimes B_{rq}^{\beta + m_{r}} \otimes \mathbb{Q}_{\Delta_{q}^{0}} \right) \oplus\\
	&\oplus \left( D_{pq}^{\delta_{pq} + m_{q} + \gamma} \otimes \mathbb{Q}_{\Delta_{q}^{0}} \right) \oplus \left( B_{pq}^{\gamma + m_{p}} \otimes \mathbb{Q}_{\Delta_{q}^{0}} \right).
	\end{align*}

Let $s = \gamma + m_{p}$ and recall that $m_{p} - m_{r} - \delta_{pr} = 2 d_{pr}$ (see \cite[\textsection 2.6]{Forum}).
	\begin{align*}
	&\bigoplus_{\alpha + \beta = \gamma} \left( \bigoplus_{r = q - 1}^{p - 1} D^{\delta_{pr} + \alpha}_{pr} \otimes B_{rq}^{\beta + m_{r}} \otimes \mathbb{Q}_{\Delta_{q}^{0}} \right) \cong\\
	\cong &\bigoplus_{\alpha' + \beta = s} \left( \bigoplus_{r = q - 1}^{p - 1} D^{\delta_{pr} + \alpha' - m_{p}}_{pr} \otimes B_{rq}^{\beta + m_{r}} \otimes \mathbb{Q}_{\Delta_{q}^{0}} \right) \cong\\
	\cong &\bigoplus_{\alpha'' + \beta' = s} \left( \bigoplus_{r = q - 1}^{p - 1} D^{\delta_{pr} + \alpha'' - m_{p} + m_{r}}_{pr} \otimes B_{rq}^{\beta'} \otimes \mathbb{Q}_{\Delta_{q}^{0}} \right) \cong\\
	\cong &\bigoplus_{\alpha'' + \beta' = s} \left( \bigoplus_{r = q - 1}^{p - 1} D^{\alpha'' - 2d_{pr}}_{pr} \otimes B_{rq}^{\beta'} \otimes \mathbb{Q}_{\Delta_{q}^{0}} \right),
	\end{align*}
where we set $\alpha' = \alpha + m_{p}$ and $\alpha'' = \alpha' - m_{r}$, $\beta' = \beta + m_{r}$. Hence, we have
	\begin{align*}
	A_{pq}^{s} \otimes \mathbb{Q}_{\Delta_{q}^{0}} \cong &\bigoplus_{\alpha + \beta = s} \left( \bigoplus_{r = q - 1}^{p - 1} D^{\alpha - 2d_{pr}}_{pr} \otimes B_{rq}^{\beta} \otimes \mathbb{Q}_{\Delta_{q}^{0}} \right) \oplus\\
	&\oplus \left( D_{pq}^{s - 2d_{pq}} \otimes \mathbb{Q}_{\Delta_{q}^{0}} \right) \oplus \left( B_{pq}^{s} \otimes \mathbb{Q}_{\Delta_{q}^{0}} \right)
	\end{align*}
for any $s \geq 0$. At long last, if we denote by
	\begin{equation*}
	a_{pq}^{s} = \dim A_{pq}^{s}, \qquad d_{pq}^{s} = \dim D_{pq}^{s}, \qquad b_{pq}^{s} = \dim B_{pq}^{s},
	\end{equation*}
we obtain identities
	\begin{equation*}
	a_{pq}^{s} = \sum_{r = q - 1}^{p - 1} \left( \sum_{\alpha + \beta = s} d_{pr}^{\alpha - 2d_{pr}} \cdot b_{rq}^{\beta} \right) + d_{pq}^{s - 2d_{pq}} + b_{pq}^{s}.
	\end{equation*}
If we formally multiply both sides by $t^{s}$,
	\begin{align*}
	a_{pq}^{s} \cdot t^{s} &= \sum_{r = q - 1}^{p - 1} \left( \sum_{\alpha + \beta = s} d_{pr}^{\alpha - 2d_{pr}} \cdot b_{rq}^{\beta} \right) \cdot t^{s} + d_{pq}^{s - 2d_{pq}} \cdot t^{s} + b_{pq}^{s} \cdot t^{s} =\\
	&= \sum_{r = q - 1}^{p - 1} \left( \sum_{\alpha + \beta = s} (d_{pr}^{\alpha - 2d_{pr}} \cdot t^{\alpha - 2d_{pr}}) \cdot (b_{rq}^{\beta} \cdot t^{\beta}) \right) \cdot t^{2d_{pr}} +\\
	&+ (d_{pq}^{s - 2d_{pq}} \cdot t^{s - 2d_{pq}}) \cdot t^{2d_{pq}} + b_{pq}^{s} \cdot t^{s}
	\end{align*}
and if we take the sum over $s \in \mathbb{Z} \mbox{ }$
	\begin{align*}
	\sum_{s} a_{pq}^{s} \cdot t^{s} &= \sum_{r = q - 1}^{p - 1} \left( \sum_{\alpha} (d_{pr}^{\alpha - 2d_{pr}} \cdot t^{\alpha - 2d_{pr}}) \cdot \sum_{\beta} (b_{rq}^{\beta} \cdot t^{\beta}) \right) \cdot t^{2d_{pr}} +\\
	&+ \sum_{s} (d_{pq}^{s - 2d_{pq}} \cdot t^{s - 2d_{pq}}) \cdot t^{2d_{pq}} + \sum_{s} b_{pq}^{s} \cdot t^{s}.
	\end{align*}
We are done, because, by definition of Poincaré polynomials, the above equality becomes
	\begin{equation*}
	H_{F_{pq}} = \sum_{r = q - 1}^{p - 1} \left( H_{T_{pr}} \cdot H_{G_{rq}} \right) \cdot t^{2d_{pr}} + H_{T_{pq}} \cdot t^{2d_{pq}} + H_{G_{pq}};
	\end{equation*}
	that is,
	\begin{align*}
	\frac{P_{k - q + 1}}{P_{k - p + 1} P_{p - q}} &= \sum_{r = q - 1}^{p - 1} \left( \frac{P_{k - c}}{P_{p - r} P_{k - c - p + r}} \cdot \frac{P_{c - q + 1}}{P_{r - q} P_{c - r + 1}} \cdot t^{2d_{pr}} \right) +\\
	&+ \frac{P_{k - c}}{P_{p - q} P_{k - c - p + q}} \cdot t^{2d_{pq}} + \frac{P_{c - q + 1}}{P_{p - q} P_{c - p + 1}}.
	\end{align*}
\end{proof}

\section{Global  polynomial identities}

We shall begin by introducing some further notations:
	\begin{align*}
	H_{p} &= \sum_{\alpha \in \mathbb{Z}} \dim_{\mathbb{Q}} H^{\alpha} (\tilde{\Delta}_{p}) \cdot t^{\alpha};\\
	I_{p} &= \sum_{\alpha \in \mathbb{Z}} \dim_{\mathbb{Q}} IH^{\alpha} (\Delta_{p}) \cdot t^{\alpha} = \sum_{\alpha \in \mathbb{Z}} \dim_{\mathbb{Q}} \mathbb{H}^{\alpha} \left( IC_{\Delta_{p}}^{\bullet} \left[ - m_{p} \right] \right) \cdot t^{\alpha};\\
	f_{pq} &= \sum_{\alpha \in \mathbb{Z}} d_{pq}^{\alpha} \cdot t^{\alpha} = \sum_{\alpha \in \mathbb{Z}} H^{\alpha}(T_{pq}) \cdot t^{\alpha}.
	\end{align*}
Recall that we  defined a small resolution $\xi_p: \mathcal{D}_{p}\to \Delta_{p}$ as 
	$$
	\xi_{p}: (V, U) \in \mathcal{D}_{p} = \{ (V, U) \in \mathbb{G}_{k}(\mathbb{C}^{l}) \times \mathbb{G}_{k + j - i_{p}}(\mathbb{C}^{l}) : V + F \subseteq U \} \mapsto V \in \Delta_{p}.
	$$
	
\begin{remark}\label{rmksmall}
The map 
\begin{equation*}
	\varphi:  U\in G:=\{U \in \mathbb{G}_{k + j - i_{p}}(\mathbb{C}^{l})\mid \,\, F\subseteq U\} \mapsto U/F \in \mathbb{G}_{k - i_{p}}(\mathbb{C}^{l - j}),
	\end{equation*}	
provides an isomorphism between $G$ and 
$\mathbb{G}_{k - i_{p}}(\mathbb{C}^{l - j})$. Therefore, we recognize $\mathcal{D}_{p}$ as the \textit{Grassmannian of subspaces of dimension $k$ of the  restriction of the tautological bundle $\mathcal{S}$ over $\mathbb{G}_{k + j - i_{p}}(\mathbb{C}^{l})$ to the subspace} $G$:
	\begin{equation*}
	\mathcal{D}_{p}\cong \mathbb{G}_k(\mathcal{S}\mid_G).
	\end{equation*}
By the   Leray-Hirsch theorem \cite[Theorem 7.33]{Voisin}, we have
\begin{align*}
	H^{\alpha}(\mathcal{D}_{p}) &\cong H^{\alpha} \left(\mathbb{G}_{k - i_{p}}(\mathbb{C}^{l - j}) \times \mathbb{G}_{k}(\mathbb{C}^{k + j - i_{p}}) \right) \cong\\
	&\cong H^{\alpha}(\mathbb{G}_{k - i_{p}}(\mathbb{C}^{l - j})) \otimes H^{\alpha}(\mathbb{G}_{k}(\mathbb{C}^{k + j - i_{p}})),
\end{align*}
(compare also with \cite[\textsection 2]{DGF2} for a discussion of the Leray-Hirsch theorem in a context which is closely related with that considered here).
\end{remark}

\medskip
The following formula, which represents the main result if this paper,  provides a strong generalization of \cite[\textsection 2, Remark 4.2]{DGF2strata}.

\begin{theorem}\label{global}
For any $q < p$, we have: 
	\begin{align*}
	\frac{P_{j} P_{l - i}}{P_{i} P_{j - i} P_{k - i} P_{l - k}} = &\frac{P_{l - j} P_{k + j - i}}{P_{k - i} P_{l + i - j - k} P_{k} P_{j - i}} +\\
	+ \sum_{s = 1}^{\min \{ k - i, k - c \} } &\frac{P_{k - c} P_{l - j} P_{k + j - i - s}}{P_{s} P_{k - c - s} P_{k - i - s} P_{l + i - j - k + s} P_{k} P_{j - i - s}} t^{2s(c - r + s)}.
	\end{align*}
\end{theorem}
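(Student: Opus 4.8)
The plan is to run the same argument as in Theorem~\ref{local}, but globally: instead of restricting the decomposition \eqref{decimpr} to a stratum and extracting graded pieces of a complex of local systems, I would take hypercohomology on the whole of $\Delta_p$ and compare Poincar\'e polynomials. First I would apply the Decomposition theorem in the form proved in \cite[Theorem 3.5]{Forum}, which on the nose gives
	\begin{equation*}
	R\pi_{p*}\mathbb{Q}_{\tilde{\Delta}_p}[m_p] \cong \bigoplus_{r=0}^{p} \bigoplus_{\alpha\in\mathbb{Z}} D_{pr}^{\delta_{pr}+\alpha}\otimes Ri_{pr*}IC_{\Delta_r}^{\bullet}[-\alpha],
	\end{equation*}
and then pass to hypercohomology, using $\mathbb{H}^{\ast}(\Delta_p,Ri_{pr*}IC_{\Delta_r}^{\bullet})=\mathbb{H}^{\ast}(\Delta_r,IC_{\Delta_r}^{\bullet})$ together with $\mathbb{H}^{\ast}(\tilde{\Delta}_p,\mathbb{Q})=H^{\ast}(\tilde{\Delta}_p)$. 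Taking the generating function in $t$, collecting the shift $\delta_{pr}$ and using $m_p-m_r-\delta_{pr}=2d_{pr}$ exactly as in the proof of Theorem~\ref{local}, this yields the identity
	\begin{equation*}
	H_p = \sum_{r=0}^{p} f_{pr}\cdot t^{2d_{pr}}\cdot I_r
	\end{equation*}
(with the convention that the $r=p$ term contributes just $I_p$, since $T_{pp}$ is a point and $d_{pp}=0$).

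\textbf{Reducing to closed form.} The next step is to evaluate the three families of polynomials appearing there. For $H_p=H_{\tilde{\Delta}_p}$ I would use the Grassmann fibration $\tilde{\Delta}_p\to\mathbb{G}_{i_p}(F)$ with fibre $\mathbb{G}_{k-i_p}(\mathbb{C}^{l-i_p})$ (the subspaces of $\mathbb{C}^l$ of dimension $k$ containing a fixed $Z$), apply Leray--Hirsch, and get $H_p = H_{\mathbb{G}_{i_p}(\mathbb{C}^j)}\cdot H_{\mathbb{G}_{k-i_p}(\mathbb{C}^{l-i_p})}$, which after substituting $i_p=k-p+1$ is a product of $P$'s. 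Alternatively, and more in the spirit of Remark~\ref{rmksmall}, since $k\le c$ is \emph{not} assumed one can always take $p$ maximal so that $\Delta_p=\mathcal S$ and read off $IH_{\mathcal S}$; but for general $p$ the cleanest route to $I_p=IH_{\Delta_p}$ is to use the small resolution $\xi_p:\mathcal{D}_p\to\Delta_p$, which by \eqref{decsmall} gives $IH^{\ast}(\Delta_p)=H^{\ast}(\mathcal{D}_p)$, and then Remark~\ref{rmksmall} evaluates $H^{\ast}(\mathcal{D}_p)$ as $H_{\mathbb{G}_{k-i_p}(\mathbb{C}^{l-j})}\cdot H_{\mathbb{G}_k(\mathbb{C}^{k+j-i_p})}$. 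Finally $f_{pr}=H_{T_{pr}}=H_{\mathbb{G}_{p-r}(\mathbb{C}^{k-c})}=P_{k-c}/(P_{p-r}P_{k-c-p+r})$ by the formula in \S\ref{Sec01}.

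\textbf{From the recursion to the stated formula.} Plugging these closed forms into $H_p=\sum_{r=0}^{p}f_{pr}t^{2d_{pr}}I_r$ gives, for each $p$, a relation expressing the product-of-Grassmannian polynomial $H_p$ as a sum over $r$ of products of $P$'s times powers of $t$. To extract the displayed two-term-plus-sum identity of Theorem~\ref{global} I would isolate the $r=p$ and $r=p-1$ contributions and re-index the remaining sum by $s$; concretely, setting the external stratum to be $\mathcal S$ itself, the $I_p=I_{\mathcal S}$ term reproduces $P_jP_{l-i}/(P_iP_{j-i}P_{k-i}P_{l-k})$ on the left, the top resolution term $H_p=H_{\mathcal S}$ side contributes $I_{\mathcal S}$ together with the "$r=p$" piece of the small-resolution evaluation giving $P_{l-j}P_{k+j-i}/(P_{k-i}P_{l+i-j-k}P_kP_{j-i})$, and the remaining sum over $r$, after substituting $i_r$, $d_{pr}$ and the index change $s=p-r$, becomes $\sum_{s=1}^{\min\{k-i,k-c\}}$ of the stated rational-function coefficient times $t^{2s(c-r+s)}$ (here the exponent comes from $2d_{pr}$ with $d_{pr}=\dim\tilde{\Delta}_p-\dim\tilde{\Delta}_r$ computed via \cite[\S 2.6]{Forum}, and the upper limit $\min\{k-i,k-c\}$ is forced by the dimensions of $T_{pr}=\mathbb{G}_{p-r}(\mathbb{C}^{k-c})$ and of the fibres). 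The bookkeeping of indices — matching $i_p=k-p+1$, $c=l-j$, $r=k-i$ in the two different conventions (the "$r$" that indexes strata versus the fixed invariant $r=k-i$ used in the theorem statement) — is the one genuinely delicate point, and the main obstacle will be keeping these straight so that the re-indexed sum lands exactly on the claimed formula; everything else is the algebra of the $P_\alpha$'s, which is routine given the explicit formulas in \S\ref{Sec01}.

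\textbf{Appendix remark.} As the authors indicate, in a particular case one can also verify the identity purely formally by manipulating the $P_\alpha$'s with a divide-and-conquer decomposition of the sum; I would mention this as an independent check rather than as part of the main proof.
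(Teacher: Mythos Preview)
Your strategy is exactly the paper's: apply the explicit decomposition of \cite[Theorem~3.5]{Forum}, take hypercohomology, pass to Poincar\'e polynomials using $m_p-m_r-\delta_{pr}=2d_{pr}$ to obtain $H_p=I_p+\sum_{q=1}^{p-1}f_{pq}\,t^{2d_{pq}}I_q$, evaluate $H_p$ via Leray--Hirsch on $\tilde{\Delta}_p\to\mathbb{G}_{i_p}(F)$ and $I_q$ via the small resolution $\xi_q$ and Remark~\ref{rmksmall}, then specialize to $p=r+1$ and re-index by $s=p-q$.

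Two small corrections in your bookkeeping paragraph. First, you have the roles of $H_p$ and $I_p$ reversed: the left-hand side $P_jP_{l-i}/(P_iP_{j-i}P_{k-i}P_{l-k})$ is $H_{r+1}$ (the Poincar\'e polynomial of the resolution $\tilde{\Delta}_{r+1}$), while the first term on the right, $P_{l-j}P_{k+j-i}/(P_{k-i}P_{l+i-j-k}P_kP_{j-i})$, is $I_{r+1}=IH_{\mathcal S}$ coming from the small resolution; there is no need to isolate an ``$r=p-1$'' term separately. Second, $d_{pr}$ is not $\dim\tilde{\Delta}_p-\dim\tilde{\Delta}_r$; it is defined in \cite[\S2.6]{Forum} by $2d_{pr}=m_p-m_r-\delta_{pr}$, which with $p=r+1$ and $s=p-q$ gives the exponent $2s(c-r+s)$ directly. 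With these fixes your outline matches the paper's proof step for step.
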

\begin{proof}
The first part of the proof is similar to that of Theorem \ref{local}. We combine the Decomposition theorem \cite[Theorem 1.6.1]{DeCMReview} and \cite[Theorem 3.5]{Forum} so as to obtain
\begin{equation*}
R \pi_{p*} \mathbb{Q}_{\tilde{\Delta}_{p}} \left[ m_{p} \right] \cong \bigoplus_{\alpha \in \mathbb{Z}} \left( \bigoplus_{q = 0}^{p} D_{pq}^{\delta_{pq} + \alpha} \otimes R i_{pq*} IC^{\bullet}_{\Delta_{q}} \right) \left[ - \alpha \right];
\end{equation*}

that is,
\begin{align*}
R \pi_{p*} \mathbb{Q}_{\tilde{\Delta}_{p}} \cong &\bigoplus_{\alpha \in \mathbb{Z}} \left( \bigoplus_{q = 1}^{p - 1} D_{pq}^{\delta_{pq} + \alpha} \otimes R i_{pq*} IC^{\bullet}_{\Delta_{q}} \left[ - m_{p} - \alpha \right] \right) \oplus\\
\oplus &\bigoplus_{\alpha \in \mathbb{Z}} \left( D_{p0}^{\delta_{p0} + \alpha} \otimes R i_{p0*} IC^{\bullet}_{\Delta_{0}} \left[ - m_{p} - \alpha \right] \right) \oplus\\
\oplus &\bigoplus_{\alpha \in \mathbb{Z}} \left( D_{pp}^{\delta_{pp} + \alpha} \otimes R i_{pp*} IC^{\bullet}_{\Delta_{p}} \left[ - m_{p} - \alpha \right] \right).
\end{align*}

We have already met the term $D_{pp}^{\delta_{pp} + \alpha}$ and the second summand of the right-hand side is the zero complex since $\Delta_{0} = \emptyset$. Hence, we have
\begin{align*}
R \pi_{p*} \mathbb{Q}_{\tilde{\Delta}_{p}} &\cong \bigoplus_{\alpha \in \mathbb{Z}} \left( \bigoplus_{q = 1}^{p - 1} D_{pq}^{\delta_{pq} + \alpha} \otimes R i_{pq*} IC^{\bullet}_{\Delta_{q}} \left[ - m_{p} - \alpha \right] \right)	\oplus IC^{\bullet}_{\Delta_{p}} \left[ - m_{p} \right] =\\
&= \bigoplus_{\alpha \in \mathbb{Z}} \left( \bigoplus_{q = 1}^{p - 1} D_{pq}^{\alpha - 2d_{pq}} \otimes R i_{pq*} IC^{\bullet}_{\Delta_{q}} \left[ - m_{q} - \alpha \right] \right)	\oplus IC^{\bullet}_{\Delta_{p}} \left[ - m_{p} \right],
\end{align*}
where we have also taken  account of $m_{p} - m_{r} - \delta_{pr} = 2 d_{pr}$ (see \cite[\textsection 2.6]{Forum}).

If we apply hypercohomology, we obtain (for any $\beta \in \mathbb{Z}$)
\begin{align*}
\mathbb{H}^{\beta} (R \pi_{p*} \mathbb{Q}_{\tilde{\Delta}_{p}}) &\cong \bigoplus_{\alpha \in \mathbb{Z}} \left( \bigoplus_{q = 1}^{p - 1} D_{pq}^{\alpha - 2d_{pq}} \otimes \mathbb{H}^{\beta - \alpha} \left( R i_{pq*} IC^{\bullet}_{\Delta_{q}} \left[ - m_{q} \right] \right) \right) \oplus\\
&\oplus \mathbb{H}^{\beta} \left( IC^{\bullet}_{\Delta_{p}} \left[ - m_{p} \right] \right)
\end{align*}
By \cite[Definition 5.4.3]{Dimca2},
\begin{equation*}
IH^{\beta}(\Delta_{p}) = \mathbb{H}^{\beta} \left( IC^{\bullet}_{\Delta_{p}} \left[ - m_{p} \right] \right)
\end{equation*}
and, by \cite[Definition 2.1.4]{Dimca2} and \cite[Chapter II, (4.5)]{Iversen},
\begin{align*}
\mathbb{H}^{\beta} (R i_{pq*} IC^{\bullet}_{\Delta_{q}}) &= H^{\beta}(\Gamma(\Delta_{p}, i_{pq*} I^{\bullet})) = H^{\beta}(\Gamma(i^{- 1}_{pq*}(\Delta_{p}), I^{\bullet})) =\\
&= H^{\beta}(\Gamma(\Delta_{q}, I^{\bullet})) = \mathbb{H}^{\beta} (IC^{\bullet}_{\Delta_{q}}),
\end{align*}
where $IC^{\bullet}_{\Delta_{q}} \rightarrow I^{\bullet}$ is an injective resolution of $IC^{\bullet}_{\Delta_{q}}$ and $\Gamma$ is the global section functor. Similarly,
\begin{align*}
\mathbb{H}^{\beta}(R \pi_{p*} \mathbb{Q}_{\tilde{\Delta}_{p}}) &= H^{\beta}(\Gamma(\Delta_{p}, R \pi_{p*} \mathbb{Q}_{\tilde{\Delta}_{p}})) = H^{\beta}(\Gamma(\Delta_{p}, \pi_{p*} I^{\bullet})) =\\
&= H^{\beta}(\Gamma(\pi_{p*}^{-1}(\Delta_{p}), I^{\bullet})) = H^{\beta}(\Gamma(\tilde{\Delta}_{p}, I^{\bullet})) =\\
&= H^{\beta}(\tilde{\Delta}_{p}, \mathbb{Q}_{\tilde{\Delta}_{p}}) = H^{\beta}(\tilde{\Delta}_{p}),
\end{align*}
where $\mathbb{Q}_{\tilde{\Delta}_{p}} \rightarrow I^{\bullet}$ is an injective resolution and the last equality is \cite[Theorem 7.12, p. 242]{Iversen}.
\\
Substituting in the above isomorphism, we obtain
\begin{equation*}
H^{\beta}(\tilde{\Delta}_{p}) \cong \bigoplus_{\alpha \in \mathbb{Z}} \left( \bigoplus_{q = 1}^{p - 1} D_{pq}^{\alpha - 2d_{pq}} \otimes IH^{\beta - \alpha} (\Delta_{q}) \right) \oplus \mathbb{H}^{\beta} \left( IC^{\bullet}_{\Delta_{p}} \left[ - m_{p} \right] \right).
\end{equation*}
As we did in the proof of Theorem \ref{local}, we conclude
\begin{align*}
\sum_{\beta \in \mathbb{Z}} \dim H^{\beta}(\tilde{\Delta}_{p}) \cdot t^{\beta} &= \sum_{q = 1}^{p - 1} \sum_{\alpha, \beta \in \mathbb{Z}} d_{pq}^{\alpha - 2d_{pq}} \cdot t^{\alpha - 2d_{pq}} \cdot\\
&\cdot \dim IH^{\beta - \alpha} (\Delta_{q}) \cdot t^{\beta - \alpha} \cdot t^{2d_{pq}} + \sum_{\beta \in \mathbb{Z}} \dim IH^{\beta} (\Delta_{p}) \cdot t^{\beta},
\end{align*}
which can be compactly rewritten as
\begin{equation}\label{For02}
H_{p} = I_{p} + \sum_{q = 1}^{p - 1} f_{pq} \cdot I_{q} \cdot t^{2d_{pq}}.
\end{equation}

Again, Leray-Hirsch theorem implies that $\tilde{\Delta}_{p}$ has the same Poincar\'e polynomial as $\mathbb{G}_{i_{p}}(F) \times \mathbb{G}_{k - i_{p}}(\mathbb{C}^{l - i_{p}})$. Thus, the left-hand side is
\begin{equation*}
H_{p}= H_{\mathbb{G}_{i_{p}}(F) \times \mathbb{G}_{k - i_{p}}(\mathbb{C}^{l - i_{p}})} = H_{\mathbb{G}_{i_{p}}(F)} \cdot H_{\mathbb{G}_{k - i_{p}}(\mathbb{C}^{l - i_{p}})}.
\end{equation*}
By virtue of \cite[Formula (19)]{Forum} and Remark \ref{rmksmall}, we have 
\begin{align*}
IH^{\alpha}(\Delta_{p}) &= \mathbb{H}^{\alpha} \left( \Delta_{p}, IC^{\bullet}_{\Delta_{p}} \left[ - m_{p} \right] \right) = \mathbb{H}^{\alpha}(\Delta_{p}, R \xi_{p*} \mathbb{Q}_{\mathcal{D}_{p}}) =\\
&= H^{\alpha}(\mathcal{D}_{p}) \cong H^{\alpha}(\mathbb{G}_{k - i_{p}}(\mathbb{C}^{l - j})) \otimes H^{\alpha}(\mathbb{G}_{k}(\mathbb{C}^{k + j - i_{p}}));
\end{align*}
in other words,
\begin{equation*}
I_{p} = H_{\mathbb{G}_{k - i_{p}}(\mathbb{C}^{l - j}) \times \mathbb{G}_{k}(\mathbb{C}^{k + j - i_{p}})} = H_{\mathbb{G}_{k - i_{p}}(\mathbb{C}^{l - j})} \cdot H_{\mathbb{G}_{k}(\mathbb{C}^{k + j - i_{p}})}.
\end{equation*}

Adopting the same notations as \textsection \ref{Sec01}, we have
\begin{align*}
H_{p} &= \frac{P_{j}}{P_{i_{p}} P_{j - i_{p}}} \cdot \frac{P_{l - i_{p}}}{P_{k - i_{p}} P_{l - k}},\\
I_{p} &= \frac{P_{l - j}}{P_{k - i_{p}} P_{l - j - k + i_{p}}} \cdot \frac{P_{k + j - i_{p}}}{P_{k} P_{j - i_{p}}}
\end{align*}
and
\begin{equation*}
f_{pq}= \frac{P_{k - c}}{P_{p - q} P_{k - c - (p - q)}}.
\end{equation*}
Formula (\ref{For02}) becomes
\begin{align*}
\frac{P_{j} P_{l - i_{p}}}{P_{i_{p}} P_{j - i_{p}} P_{k - i_{p}} P_{l - k}} = &\frac{P_{l - j} P_{k + j - i_{p}}}{P_{k - i_{p}} P_{l - j - k + i_{p}} P_{k} P_{j - i_{p}}} +\\
+ \sum_{q = 1}^{\min \{ p - 1, k - c - p \} } &\frac{P_{k - c} P_{l - j} P_{k + j - i_{q}}}{P_{p - q} P_{k - c - (p - q)} P_{k - i_{q}} P_{l - j - k + i_{q}} P_{k} P_{j - i_{q}}} t^{2d_{pq}}.
\end{align*}

Since we are interested in studying the Poincaré polynomials of the Schubert variety $\mathcal{S}$, we are going to take $p = r + 1$. Bearing in mind that $i_{q} = k - q + 1$ (in particular, $i_{p} = i_{r + 1} = k - r = i$) and $c = l - j$, if we set $s = p - q = r + 1 - q$, we have (from left to right, numerators first)
\begin{align*}
l - i_{p} &= l - i,\\
k + j - i_{p} &= k + j - i,\\
k + j - i_{q} &= j + q - 1 = j + r - s = j + k - i - s,\\
j - i_{p} &= j - i,\\
k - i_{p} &= k - i,\\
l - j - k + i_{p} &= l + i - j - k,\\
k - i_{q} &= q - 1 = r - s = k - i - s,\\
l - j - k + i_{q} &= l - j - q + 1 = l - j + s - r = l - j + s - k + i\\
j - i_{q} &= j - k + q - 1 = j - k + r - s = j - i - s
\end{align*}

and the previous equality becomes ($2d_{pq} = 2(p - q)(c + 1 - q) = 2s(c + s - r)$)
\begin{align*}
\frac{P_{j} P_{l - i}}{P_{i} P_{j - i} P_{k - i} P_{l - k}} = &\frac{P_{l - j} P_{k + j - i}}{P_{k - i} P_{l + i - j - k} P_{k} P_{j - i}} +\\
+ \sum_{s = 1}^{\min \{ k - i, k - c \} } &\frac{P_{k - c} P_{l - j} P_{k + j - i - s}}{P_{s} P_{k - c - s} P_{k - i - s} P_{l + i - j - k + s} P_{k} P_{j - i - s}} t^{2s(c - r + s)}.
\end{align*}
\end{proof}

\begin{corollary}
\label{Corollary} For all $p=2,\dots,r+1$ one has:
$$
I_p=H_p-\sum_{q=1}^{p-1}P_{pq}(t)=H_p-\sum_{q=1}^{p-1}t^{2d_{pq}}f_{pq}I_q.
$$
\end{corollary}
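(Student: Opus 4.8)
The plan is to recognize the Corollary as nothing more than a rearrangement of formula \eqref{For02}, which was already established in the course of the proof of Theorem \ref{global}. The first observation I would make is that the derivation of \eqref{For02}, namely
\[
H_{p} = I_{p} + \sum_{q=1}^{p-1} f_{pq}\cdot I_{q}\cdot t^{2d_{pq}},
\]
was carried out for an arbitrary index $p$; the choice $p = r+1$ was made only afterwards, in order to specialize to $\mathcal S$. Hence \eqref{For02} is at our disposal for every $p\in\{2,\dots,r+1\}$, and solving it for $I_p$ gives
\[
I_{p} = H_{p} - \sum_{q=1}^{p-1} t^{2d_{pq}} f_{pq} I_{q}.
\]
Setting $P_{pq}(t) := t^{2d_{pq}} f_{pq} I_q$, this is exactly the chain of equalities in the statement, so the proof is complete.

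I would then append a brief remark on why it is worth isolating this identity: it furnishes an \emph{inductive algorithm} computing $I_p = IH_{\Delta_p}$, and in particular (for $p=r+1$) $IH_{\mathcal S}$, starting from the explicit Grassmannian Poincaré polynomials. The recursion is well founded because its base case is transparent: $\Delta_1 = \{V\in\mathbb{G}_k(\mathbb{C}^l) : V\subseteq F\}\cong \mathbb{G}_k(\mathbb{C}^{j})$ is smooth, so $IC^{\bullet}_{\Delta_1}\cong\mathbb{Q}_{\Delta_1}[m_1]$ and
\[
I_{1} = H_{1} = \frac{P_j}{P_k P_{j-k}}.
\]
Together with the closed-form expressions for $H_p$ and $f_{pq}$ obtained through the Leray--Hirsch theorem in the proof of Theorem \ref{global} (that is, $H_p = H_{\mathbb{G}_{i_p}(F)}\cdot H_{\mathbb{G}_{k-i_p}(\mathbb{C}^{l-i_p})}$ and $f_{pq} = H_{T_{pq}} = P_{k-c}/(P_{p-q}P_{k-c-p+q})$), the formula of the Corollary then determines $I_2, I_3,\dots, I_{r+1}$ one after another.

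As for the main difficulty: there is essentially none, since all the substance already resides in \eqref{For02}. The only point deserving (immediate) verification is that the argument establishing \eqref{For02} nowhere invoked the equality $p=r+1$, so that the identity, and with it the recursion, is genuinely available at every stratum $\Delta_p$.
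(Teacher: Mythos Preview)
Your proposal is correct and matches the paper's approach exactly: the paper gives no separate proof for the Corollary, treating it as an immediate rearrangement of formula \eqref{For02}, which was indeed derived for arbitrary $p$ before the specialization $p=r+1$. Your additional remarks on the base case $I_1$ and the resulting inductive computation anticipate precisely the content of Remark \ref{ExampleIC}.
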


\begin{remark}
\label{ExampleIC} From the previous
corollary we get an \textit{explicit inductive algorithm for the  computation of  Poincar\'e polynomials of the intersection cohomology of Special Schubert varieties}, which is described by the following equality, where we put $g_{pq}=t^{2d_{pq}}f_{pq}$ in order to simplify the notation:
$$
\left[\begin{matrix}
I_{r+1}\\I_r\\.\\.\\.\\I_2\\I_1\end{matrix}\right]=
\left[\begin{matrix}
1&g_{r+1,r}&g_{r+1,r-1}&g_{r+1,r-2}&\dots&g_{r+1,1}\\0&1&g_{r,r-1}&g_{r,r-2}&\dots&g_{r,1}\\
0&0&1&g_{r-1,r-2}&\dots&g_{r-1,1}\\.&.&.&.&.&.\\.&.&.&.&.&.\\0&0&0&\dots&1&g_{21}\\0&0&0&0&\dots&1
\end{matrix}\right]^{-1}
\cdot \left[\begin{matrix}
H_{r+1}\\H_r\\.\\.\\.\\H_2\\H_1\end{matrix}\right]=
$$
$$
=\sum _{k=0}^r (-1)^k
\left[\begin{matrix}
0&g_{r+1,r}&g_{r+1,r-1}&g_{r+1,r-2}&\dots&g_{r+1,1}\\0&0&g_{r,r-1}&g_{r,r-2}&\dots&g_{r,1}\\
0&0&0&g_{r-1,r-2}&\dots&g_{r-1,1}\\.&.&.&.&.&.\\.&.&.&.&.&.\\0&0&0&\dots&0&g_{21}\\0&0&0&0&\dots&0
\end{matrix}\right]^{k}
\cdot \left[\begin{matrix}
H_{r+1}\\H_r\\.\\.\\.\\H_2\\H_1\end{matrix}\right].
$$
\end{remark}

\medskip 
\begin{small}
\section{Appendix: a symbolic point of view}

In this Appendix, we consider the global polynomial identity of Theorem~\ref{global} from a symbolic point of view. Recall that the requests over the integers $i,j,k,j,l$ and the values $r:=k-i,c:=l-j$ are
$$0<i<k\leq j<l \text{ and } 0<r<c<k,$$
or, equivalently,
\begin{equation}\label{eq:geom-cond}
0<i<j \text{ and } 0<r<c < r+i \leq j.
\end{equation}




Nevertheless, if we also assume $P_{\alpha} = 0$ for every $\alpha < 0$, the polynomial identity of Theorem~\ref{global} symbolically makes sense, i.e.~the denominators do not vanish, under the following weaker assumptions:
$$0\leq i \leq k \leq j \text{ and } 0\leq r\leq c\leq k$$
or, equivalently,
\begin{equation}\label{eq:symb-cond}
0\leq i \leq j \text{ and }  0\leq r \leq c \leq r+i \leq j.
\end{equation}
However, in the few further cases $r=0$ or $c=r+i$ or $i=0$ or $i=j$, which we have under the  assumptions \eqref{eq:symb-cond}, the polynomial identity trivially holds. For the remaining case $c=r$, we obtain some experimental evidences verifying the polynomial identity for the $4$-uples $(i,j,c,r)$, where $c=r$ varies in $[2,20]$, $i$ in $[1,20]$ and $j$ in $[r+i,40]$, by direct computations performed in CoCoA5 (see \cite{CoCoA5}).

In the following, we consider the assumptions \eqref{eq:geom-cond} (except for some special cases that will be highlighted), and give a proof of that identity by a mere algebraic manipulation when $2=\min\{k-i, k-c\}$, which is the first case with a significant geometric meaning in the context of this paper.

By direct computations, we also verified that the polynomial identity of Theorem 4.2 holds for all the $4$-uples $(i,j,c,r)$ where $i$ varies in $[1,20]$, $r$ in $[2,20]$, $j$ in $[r+i,40]$ and $c$ in $[r+1,r+i-1]$. At http://wpage.unina.it/cioffifr/PolynomialIdentity, some CoCoA5 functions that perform such computations are available.

\subsection{Case $2=k-i \leq k-c$}
With the same notation of Theorem~\ref{global} and recalling that $k=i+2$ and $\ell=j+c$, the global polynomial identity becomes:
\begin{equation}\label{caso k-i=2}
\begin{array}{c}
\displaystyle{\frac{P_jP_{j+c-i}}{P_iP_{j-i}P_2P_{j+c-i-2}} = \frac{P_cP_{j+2}}{P_2P_{c-2}P_{i+2}P_{j-i}}  +} \\
\displaystyle{+t^{2(c-1)} \frac{P_{i-c+2}P_cP_{j+1}}{P_{i-c+1}P_{c-1}P_{i+2}P_{j-i-1}} + t^{4c} \frac{P_{i-c+2}P_j}{P_2P_{i-c}P_{i+2}P_{j-i-2}}}
\end{array}
\end{equation}
where only the parameters $i,j,c$ appear. Note that formula \eqref{caso k-i=2} does make sense for every $j\geq i+2$ and $c\geq 2$.
Let
\vskip 2mm
$\displaystyle{ H := \frac{P_jP_{j+c-i}}{P_iP_{j-i}P_2P_{j+c-i-2}}}$, \quad $\displaystyle{ F:= \frac{P_cP_{j+2}}{P_2P_{c-2}P_{i+2}P_{j-i}}}$, 
\vskip 2mm
$\displaystyle{ F_1 :=  t^{2(c-1)} \frac{P_{i-c+2}P_cP_{j+1}}{P_{i-c+1}P_{c-1}P_{i+2}P_{j-i-1}}}$, \quad $\displaystyle{ F_2 :=  t^{4c} \frac{P_{i-c+2}P_j}{P_2P_{i-c}P_{i+2}P_{j-i-2}}}$.  

\noindent Applying the fact that $\displaystyle{\frac{P_\alpha}{P_{\alpha-h}} =h_{\alpha-1}\dots h_{\alpha-h}}$, for every $\alpha> h\geq 0$, we compute
$\displaystyle{ F:= \frac{h_{c-2}h_{c-1}P_{j+2}}{h_1 P_{i+2}P_{j-i}}}$ and observe that

\vskip 2mm
$\displaystyle{ H = F \left[\frac{h_{j+c-i-2}h_{j+c-i-1}h_ih_{i+1}}{h_jh_{j+1}h_{c-2}h_{c-1}} \right] }$,
$\displaystyle{ F_1 =  F \left[  t^{2(c-1)} \frac{h_1h_{i-c+1}h_{j-i-1}}{ h_{j+1}h_{c-2} }\right] }$,
\vskip 2mm
$\displaystyle{ F_2 = F  \left[t^{4c}  \frac{h_{i-c}h_{i-c+1}h_{j-i-2}h_{j-i-1}}{h_{c-2}h_{c-1}h_jh_{j+1}} \right]  }$.

\noindent Hence, letting 
\begin{align*}
F(i,j,c)&:= \frac{h_{j+c-i-2}h_{j+c-i-1}h_ih_{i+1}}{h_jh_{j+1}h_{c-2}h_{c-1}} +
\\ & -t^{2(c-1)} \frac{h_1h_{i-c+1}h_{j-i-1}}{ h_{j+1}h_{c-2} }  -t^{4c}  \frac{h_{i-c}h_{i-c+1}h_{j-i-2}h_{j-i-1}}{h_{c-2}h_{c-1}h_jh_{j+1}},
\end{align*}
\noindent formula \eqref{caso k-i=2} holds if and only if $F(i,j,c)=1$. Observe that $F(i,j,c)$ does make sense for every $c\geq 2$ and for every positive integers $i,j$.
In the following, we repeatedly apply the equality
$t^{2\alpha}h_\beta = h_{\alpha+\beta} - h_{\alpha-1}$, which holds for every $\alpha, \beta \geq 0$.

\medskip
We now show that $F(i,j,3)=1$ for $j\geq i$, where 
\begin{align*}
F(i,j,3)=\frac{h_ih_{i+1}}{h_jh_{j+1}}\frac{h_{j-i+1}h_{j-i+2}}{P_3}  -t^4 \frac{h_{i-2}h_{j-i-1}}{h_{j+1}} -t^{12}\frac{h_{j-i-2}h_{j-i-1}h_{i-2}h_{i-3}}{P_3 h_j h_{j+1}}.
\end{align*} 
Although we should consider $i\geq c\geq 3$ due to  \eqref{eq:geom-cond}, symbolically we easily obtain 
$$F(2,j,3)=\frac{h_2h_3}{h_jh_{j+1}} \frac{h_jh_{j-1}}{h_1h_2} -t^4\frac{h_{j-3}h_1}{h_{j+1}h_1}= \frac{h_3}{h_{j+1}} \frac{h_{j-1}}{h_1} - \frac{(h_{j-1}-h_1) h_1}{h_1h_{j+1}} =$$
$$\frac{(h_3-h_1)h_{j-1}+h_1^2}{h_1h_{j+1}} =\frac{t^4(1+t^2)h_{j-1}+h_1^2}{h_1h_{j+1}}= \frac{h_{j+1}-h_1+h_1}{h_{j+1}}=1.$$
Hence, by induction, we can assume $F(i-1,j,3)=1$ and prove that $F(i,j,3)-F(i-1,j,3)$ is null, for every $i>2$. We compute
\begin{align*}
F(i,j,3)-F(i-1,j,3)=
\frac{ h_i h_{j-i+2}(h_{i+1}h_{j-i+1} - h_{i-1}h_{j-i+3}) } {h_jh_{j+1}P_3} + \\
 - \frac{ t^4 h_j P_3 (h_{i-2}h_{j-i-1}-h_{i-3}h_{j-i}) }{h_jh_{j+1}P_3}- \frac{ t^{12} h_{j-i-1} h_{i-3} (h_{j-i-2}h_{i-2}   -  h_{j-i}h_{i-4})}{h_j h_{j+1}P_3}.
\end{align*}

\noindent So, letting 
\vskip 2mm
$G_0:= h_i h_{j-i+2} (h_{i+1}h_{j-i+1} - h_{i-1}h_{j-i+3})$

$G_1:=  -t^4 h_j P_3 (h_{i-2}h_{j-i-1}  - h_{i-3}h_{j-i})$

$G_2:=  -t^{12} h_{j-i-1} h_{i-3} (h_{j-i-2}h_{i-2}   -  h_{j-i}h_{i-4})$,

\noindent we obtain $F(i,j,3)-F(i-1,j,3)=0$ if and only if $G_0+G_1+G_2=0$. 

\medskip
\noindent Being $h_{i+1}=t^4h_{i-1}+h_1$ e $h_{j-i+3}=t^4h_{j-i+1}+h_1$, $G_0$ becomes:
\begin{itemize}
\item $G_0=h_ih_{j-i+2}h_1 (h_{j-i+1} - h_{i-1})$.
\end{itemize}
Being $h_{i-2}= t^2h_{i-3}+h_0$, $h_{j-i}=t^2h_{j-i-1}+h_0$ and then $t^4h_{i-3}=h_{i-1}-h_1$, $t^4h_{j-i-1}= h_{j-i+1}-h_1$, $G_1$ becomes:
\begin{itemize}
\item $G_1= h_jP_3(h_{i-1}-h_{j-i+1})$.
\end{itemize}
Being $h_{j-i}= t^4h_{j-i-2}+h_1$ e $h_{i-2}= t^4 h_{i-4}+h_1$, $G_2$ becomes
\begin{itemize}
\item $G_2=  t^6 h_{j-i-1} h_{i-3}h_1  (h_{i-1}-h_{j-i+1})$.
\end{itemize}
So, we obtain 
$G_0+G_1+G_2= h_1   (h_{i-1}-h_{j-i+1}) (h_i h_{j-i+2}- h_jh_2 -t^6 h_{j-i+1}h_{i-3})=0.$

\medskip
Recall that we are assuming $k-i\leq k-c$, hence $i\geq c$, and we know that $F(i,j,3)=1$. So, our thesis now is $F(i,j,c)-F(i,j,c-1)=0$. Assuming $c> 3$, we compute
$$F(i,j,c)-F(i,j,c-1)= 
 \frac{h_{j+c-i-2}h_{j+c-i-1}h_ih_{i+1}}{h_jh_{j+1}h_{c-2}h_{c-1}} -  \frac{h_{j+c-i-3}h_{j+c-i-2}h_ih_{i+1}}{h_jh_{j+1}h_{c-3}h_{c-2}} + $$ 
$$  -t^{2(c-1)} \frac{h_1h_{i-c+1}h_{j-i-1}}{ h_{j+1}h_{c-2} } +  t^{2(c-2)} \frac{h_1h_{i-c+2}h_{j-i-1}}{ h_{j+1}h_{c-3} } + $$
$$  -t^{4c}  \frac{h_{i-c}h_{i-c+1}h_{j-i-2}h_{j-i-1}}{h_{c-2}h_{c-1}h_jh_{j+1}}  +  t^{4(c-1)}  \frac{h_{i-c+1}h_{i-c+2}h_{j-i-2}h_{j-i-1}}{h_{c-3}h_{c-2}h_jh_{j+1}}.$$
So, letting
\vskip 2mm
$Q_0 :=  h_{j+c-i-2}h_{j+c-i-1}h_ih_{i+1}h_{c-3} -  h_{j+c-i-3}h_{j+c-i-2}h_ih_{i+1}h_{c-1}$

$Q_1 :=   -t^{2(c-1)} h_1h_{i-c+1}h_{j-i-1}h_jh_{c-1}h_{c-3} +  t^{2(c-2)} h_1h_{i-c+2}h_{j-i-1}h_jh_{c-1}h_{c-2} $

$Q_2 :=  -t^{4c}  h_{i-c}h_{i-c+1}h_{j-i-2}h_{j-i-1}h_{c-3} +  t^{4(c-1)}  h_{i-c+1}h_{i-c+2}h_{j-i-2}h_{j-i-1}h_{c-1} $,
\vskip 2mm
\noindent we have $F(i,j,c)-F(i,j,c-1)=0$ if and only if $Q_0+Q_1+Q_2=0$.

\medskip
\noindent Being $h_{j+c-i-1}= t^4 h_{j+c-i-3}+h_1$ and $t^4h_{c-3}=h_{c-1}-h_1$, then $Q_0$ becomes
\begin{itemize}
\item $Q_0=h_{j+c-i-2}h_1h_ih_{i+1}(h_{c-3}- h_{j+c-i-3})= -t^{2(c-2)}h_{j+c-i-2}h_1h_ih_{i+1} h_{j-i-1}$.
\end{itemize}
Being $t^{2(c-1)}h_{i-c+1}=h_{i}-h_{c-2}$  and  $t^{2(c-2)}h_{i-c+2}=h_{i}-h_{c-3}$, then $Q_1$ becomes
\begin{itemize}
\item $Q_1 = t^{2(c-2)}h_1h_ih_jh_{c-1}h_{j-i-1}$.
\end{itemize}
Being $t^4h_{i-c}=h_{i-c+2}-h_1$, then $Q_2$ becomes
\begin{itemize}
\item $Q_2= t^{4(c-1)} h_{i-c+1}h_{j-i-2}h_{j-i-1}((h_{i-c+2}-h_1)h_{c-3} +   h_{i-c+2}h_{c-1})$.
\end{itemize}
Note that, if $j=i$ or $j=i+1$, then $Q_0+Q_1=0=Q_2$.
So, we now consider the less obvious case $j>i+1$. Observe that
$$Q_1+Q_0= t^{2(c-2)}h_1h_ih_{j-i-1}(h_jh_{c-1}-h_{i+1}h_{j+c-i-2}) = t^{2(c-2)}h_1h_ih_{j-i-1} h_{j-i-2}(h_{c-1}-h_{i+1})=$$
$$= -t^{2(c-2)} h_1h_ih_{j-i-1} h_{j-i-2} t^{2c} h_{i-c+1}.$$
We are done, because:
$$Q_1+Q_0 +Q_2= t^{4(c-1)}h_{i-c+1}h_{j-i-2}h_{j-i-1}(-h_1h_i+(h_{i-c+2}-h_1)h_{c-3} + h_{i-c+2}h_{c-1}) =$$
$$= t^{4(c-1)}h_{i-c+1}h_{j-i-2}h_{j-i-1} h_{i-c+2}(t^{2(c-1)}+t^{2(c-2)} -h_1t^{2(c-2)})=0.$$

\subsection{Case $2=k-c < k-i$}
In this case we have $r+i=k=c+2$, $\ell=j+c$, $c=r+i-2$, and the global polynomial identity becomes:
\begin{equation}\label{caso k-c=2}
\begin{array}{c}
\displaystyle{
\frac{P_j P_{j+r-2}}{P_i P_{j-i} P_r P_{j-2}} = \frac{P_{r+i-2}}{P_r P_{i-2}} \frac{P_{r+j}}{P_{r+i}P_{j-i}}
} + \\
\displaystyle{ 
+ t^{2(i-1)} \frac{P_2 P_{r+i-2}P_{r+j-1}}{P_{r-1}P_{i-1}P_{r+i}P_{j-i-1}} + t^{4i} \frac{P_2 P_{r+i-2} P_{r+j-2}}{P_2P_{r-2}P_i P_{r+i} P_{j-i-2}}
}
\end{array}
\end{equation}
where only the parameters $i,j,r$ appear. Note that formula \eqref{caso k-c=2} does make sense for every $2\leq r$ and $2\leq i \leq j-2$. Let
\vskip 2mm
$\displaystyle{K:=\frac{P_j P_{j+r-2}}{P_i P_{j-i} P_r P_{j-2}}}$, $\displaystyle{E:= \frac{P_{r+i-2}}{P_r P_{i-2}} \frac{P_{r+j}}{P_{r+i}P_{j-i}}}$,

$\displaystyle{E_1:= t^{2(i-1)} \frac{P_2 P_{r+i-2}P_{r+j-1}}{P_{r-1}P_{i-1}P_{r+i}P_{j-i-1}}}$,  $\displaystyle{E_2:=t^{4i} \frac{P_2 P_{r+i-2} P_{r+j-2}}{P_2P_{r-2}P_i P_{r+i} P_{j-i-2}}}$.
\vskip 2mm
Analogously to the previous case, we compute
\vskip 2mm
$\displaystyle{K=E\left[\frac{h_{j-1}h_{j-2}h_{r+i-1}h_{r+i-2}}{h_{i-1}h_{i-2}h_{r+j-1}h_{r+j-2}} \right]}$, 
$\displaystyle{E_1=E \left[ t^{2(i-1)} \frac{h_{r-1}h_1h_{j-i-1}h_{i-1}h_{r+j-2}} {h_{r+j-1}h_{i-2}h_{i-1}h_{r+j-2}}\right]}$, 
\vskip 2mm
$\displaystyle{E_2=E\left[t^{4i}\frac{h_{r-2}h_{r-1}h_{j-i-2}h_{j-i-1}} {h_{r+j-2}h_{r+j-1}h_{i-2}h_{i-1}} \right]}$.

\noindent Hence, letting 
\begin{align*}
F\!F(i,j,r)&:= \frac{h_{j-1}h_{j-2}h_{r+i-1}h_{r+i-2}}{h_{i-1}h_{i-2}h_{r+j-1}h_{r+j-2}} + \\
& - t^{2(i-1)} \frac{h_{r-1}h_1h_{j-i-1}h_{i-1}h_{r+j-2}} {h_{r+j-1}h_{i-2}h_{i-1}h_{r+j-2}} - t^{4i}\frac{h_{r-2}h_{r-1}h_{j-i-2}h_{j-i-1}} {h_{r+j-2}h_{r+j-1}h_{i-2}h_{i-1}},
\end{align*}
\noindent formula \eqref{caso k-c=2} holds if and only if $F\!F(i,j,r)=1$. Observe that $F\!F(i,j,r)$ does make sense for all integers $j\geq i\geq 2$ and $r\geq 0$. We easily check that $F\!F(i,j,0)=1$. So, we now assume $r>0$ and prove that $F\!F(i,j,r)-F\! F(i,j,r-1)=0$. Arguing as in the case $k-i=2$, we let

$H_0:= h_{r+j-3}h_{j-1}h_{j-2}h_{r+i-1}h_{r+i-2} - h_{r+j-1}h_{j-1}h_{j-2}h_{r+i-2}h_{r+i-3}$

$H_1:= t^{2(i-1)} h_{r+j-1}h_{r-2}h_1h_{j-i-1}h_{i-1}h_{r+j-3} - t^{2(i-1)}h_{r+j-3} h_{r-1}h_1h_{j-i-1}h_{i-1}h_{r+j-2}$

$H_2:= t^{4i} h_{r+j-1}h_{r-3}h_{r-2}h_{j-i-2}h_{j-i-1} - t^{4i} h_{r+j-3} h_{r-2}h_{r-1}h_{j-i-2}h_{j-i-1}$

\noindent so that the thesis now is $H_0+H_1+H_2=0$.

\medskip
We apply in $H_0$ the following replacements, in the given order:
first $h_{r+i-1} = t^4h_{r+i-3}+h_1$ and $h_{r+j-1}=t^4h_{r+j-3}+h_1$, then $h_{r+j-2}-h_{r+i-2}=t^{2(r+i-2)}h_{j-i-1}$. Hence, $H_0$ becomes
\begin{itemize}
\item $H_0=h_{j-1}h_{j-2}h_{r+i-2}h_1 t^{2(r+i-2)}h_{j-i-1}$.
\end{itemize}
We apply in $H_1$ the following replacements, in the given order: first $h_{r+j-1}=t^2h_{r+j-2}+h_0$ and $h_{r-1}=t^2h_{r-2}+h_0$, then $h_{r-2}-h_{r+j-2}= -t^{2(r-1)}h_{j-1}$.
Hence, $H_1$ becomes
\begin{itemize}
\item $H_1=t^{2(i-1)}h_{r+j-3}h_{j-i-1}h_{i-1}(-t^{2(r-1)}h_{j-1})h_1$.
\end{itemize}
We apply in $H_2$ the following replacements, in the given order:  first $h_{r+j-1}=t^4h_{r+j-3}+h_1$ and $h_{r-1}=t^4h_{r-3}+h_1$, then $h_{r-3}-h_{r+j-3} = -t^{2(r-2)}h_{j-1}$.  Hence, $H_2$ becomes
\begin{itemize}
\item $H_2= - t^{2i}t^{2(r+i-2)} h_{r-2}h_{j-i-2}h_{j-i-1}h_1 h_{j-1}$.
\end{itemize}
\noindent Thus, we now have 
$$H_0+H_1+H_2=  t^{2(r+i-2)} h_1h_{j-1}h_{j-i-1}\Bigl(h_{j-2}h_{r+i-2} -  h_{r+j-3}h_{i-1} - t^{2i} h_{r-2}h_{j-i-2}\Bigr).$$
Being $h_{j-2}= t^{2(j-i-1)}h_{i-1}+h_{j-i-2}$ and $h_{r+j-3}=t^{2(j-i-1)}h_{r+i-2}+h_{j-i-1}$, we finally obtain
$$H_0+H_1+H_2=  t^{2(r+i-2)} h_1h_{j-1}h_{j-i-1}h_{j-i-2}\Bigl(h_{r+i-2} - h_{i-1} - t^{2i} h_{r-2}\Bigr)=0.$$
\end{small}

\end{document}